\newtheorem{theorem}{Theorem}%[section]
\newtheorem{lemma}[theorem]{Lemma}
\newtheorem*{question}{Question}
\newcommand{\T}{\mathbb T}
\newenvironment{proof*}{\textbf{\emph{Proof of }}}{\hspace*{\fill} $\square$}
\begin{document}
\title[Helson's problem for sums of a random multiplicative function]
%Steinhaus and Rademacher 
{Helson's problem for sums of a random multiplicative function}

\author{Andriy Bondarenko}
\address{Department of Mathematical Analysis\\ Taras Shevchenko National University of Kyiv\\
Volodymyrska 64\\ 01033 Kyiv\\ Ukraine}
\address{Department of Mathematical Sciences \\ Norwegian University of Science and Technology \\ NO-7491 Trondheim \\ Norway}

\email{andriybond@gmail.com}
\author[Kristian Seip]{Kristian Seip}
\address{Department of Mathematical Sciences \\ Norwegian University of Science and Technology \\ NO-7491 Trondheim \\ Norway}
\email{seip@math.ntnu.no}

\thanks{Research supported by Grant 227768 of the Research Council of Norway.}
\subjclass[2010]{11N60, 32A70, 42B30, 60G50}
%\date{\today}
\maketitle

\begin{abstract} We consider the random functions $S_N(z):=\sum_{n=1}^N z(n) $, where $z(n)$ is the completely multiplicative random function generated by independent Steinhaus variables $z(p)$. It is shown that ${\Bbb E} |S_N|\gg \sqrt{N}(\log N)^{-0.05616}$ and that $({\Bbb E} |S_N|^q)^{1/q}\gg_{q} \sqrt{N}(\log N)^{-0.07672}$ for all $q>0$. 
\end{abstract}

\section{Introduction}

This paper deals with the following

\begin{question}
Do there exist absolute constants $c>0$, $ 0<\lambda <1$ such that for every positive integer $N$ and every interval $I$ whose length exceeds some number depending on $N$, we have
 \[\left| \sum_{n=1}^N n^{-it} \right|\ge c\sqrt{N}\] on a subset of $I$ of measure larger than $\lambda |I|$?
\end{question}
We do not know the answer and can only conclude from our main result that we have, for every $\varepsilon>0$ and suitable $c=c(\varepsilon)$,
\begin{equation} \label{almost} \left| \sum_{n=1}^N n^{-it} \right|\ge c\sqrt{N}(\log N)^{-0.07672}\end{equation}
on a subset of measure $(\log N )^{-\varepsilon} |I|$ of every sufficiently large interval $I$.

Our question fits into the following general framework.  We begin by associating with every prime $p$ a random variable $X(p)$ with mean $0$ and variance $1$, and we assume that these variables are  independent and identically distributed. We then define $X(n)$ by requiring it to be a completely multiplicative function for every point in our probability space. Now suppose that $a(n)$ is an arithmetic function which is either $0$ or $1$ for every $n$. We refer to the sequence 
\[ C_N(X):=\sum_{n=1}^N a(n) X(n) \]
as the arithmetic chaos associated with $X$ and $a(n)$. 

Our question concerns the case when $X(p)$ are independent Steinhaus variables $z(p)$, i.e. the random variable $z(p)$ is equidistributed on the unit circle. When $a(n)\equiv 1$, we refer to the resulting sequence
\[ S_N(z):=\sum_{n=1}^N  z(n) \] 
as arithmetic Steinhaus chaos. The relation between our question  and arithmetic Steinhaus chaos is given by the well-known norm identity
\begin{equation} \label{normid} {\mathbb E} (|S_N|^q) =\lim_{T\to \infty} \frac{1}{T} \int_0^T \left|\sum_{n=1}^N n^{-it}\right|^q dt, \end{equation}
valid for all $q>0$ (see \cite[Section 3]{SS}).

The point of departure for our research is Helson's last paper \cite{H} in which he conjectured that ${\Bbb E} (|S_N|)=o(\sqrt{N})$ when $N\to \infty$. This means that Helson anticipated that our question has a negative answer. Using an inequality from another paper of Helson \cite{H2}, we get immediately that
\begin{equation} \label{henry} {\Bbb E} (|S_N|)\gg \sqrt{N} (\log N)^{-1/4}. \end{equation}
Our attempt to settle Helson's conjecture has resulted in a reduction from $1/4$ to $0.05616$ in the exponent of the logarithmic factor in  \eqref{henry}. We note in passing that the problem leading Helson to his conjecture was solved in \cite{OS} avoiding the use of the random functions $S_N$.

To get a picture of what our work is about, it is instructive to return 
for a moment to a general arithmetic chaos $C:=(C_N(X))$. To this end, let us assume that $X(p)$ is such that the moments
\[ \| C_N \|_q^q:={\Bbb E}(|C_N|^q) \]
are well defined for all $q>0$. We declare the number
\[ q(C):=\inf\left\{q>0: \ \limsup_{N\to \infty} \| C_N \|_{q+\varepsilon}/\|C_N \|_q=\infty \quad \text{for every $\varepsilon>0$}\right\} \]
to be the critical exponent of $C$, setting $q(C)=\infty$ should the set on the right-hand side be empty. A problem closely related to Helson's conjecture is that of computing the critical exponent of a given arithmetic chaos. We observe that $q(C)\ge 2$ is equivalent to the statement that there exist absolute constants $c>0$, $0<\lambda<1$ such that
\[ {\Bbb P}\left(|C_N|\ge c\sqrt{N}\right)\ge \lambda \]
holds for all $N$, cf. our question. In our case, the critical exponent is strictly smaller than 4, and then a serious obstacle for saying much more is that only even moments are accessible by direct methods. 

We will prove the following result about  arithmetic Steinhaus chaos.

\begin{theorem}\label{th18}
We have
\begin{equation} \label{small} \| S_N\|_q \gg_q \sqrt{N}(\log N)^{-0.07672} \end{equation}
for all $q>0$. 
\end{theorem}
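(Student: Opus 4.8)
The plan is to exploit that $q\mapsto\|S_N\|_q$ is nondecreasing, so that it suffices to treat small $q$: for $q\ge 1$ the bound \eqref{small} follows already from the companion $L^1$ estimate $\|S_N\|_1\gg\sqrt N(\log N)^{-0.05616}$ together with $\|S_N\|_q\ge\|S_N\|_1$ and $0.05616<0.07672$. The difficulty is concentrated at the other end: as $q\to 0^+$ one has $\|S_N\|_q\to\exp(\mathbb{E}\log|S_N|)$, so a bound valid for every $q>0$ with a fixed logarithmic exponent is equivalent to the lower-tail statement
\[ \mathbb{E}\log|S_N|\ge \tfrac12\log N-0.07672\,\log\log N-O(1). \]
Thus the whole theorem reduces to understanding how rarely $|S_N|$ is much smaller than its root mean square $\|S_N\|_2=\sqrt N$, and ordinary interpolation between finitely many even moments cannot help here, since it always loses a factor blowing up like $1/q$.

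First I would convert this into an anti-concentration (small-ball) estimate. Writing the negative part of $\log(|S_N|/\sqrt N)$ through its distribution function gives
\[ -\mathbb{E}\log\frac{|S_N|}{\sqrt N}\le \int_0^\infty \mathbb{P}\left(|S_N|\le e^{-s}\sqrt N\right)\,ds, \]
so it is enough to show that $\mathbb{P}(|S_N|\le\lambda\sqrt N)$ decays in $\lambda$ fast enough that the integral is at most $0.07672\log\log N+O(1)$; concretely, a bound of the shape $\mathbb{P}(|S_N|\le\lambda\sqrt N)\lesssim \lambda^2(\log N)^{2\cdot 0.07672}$ (capped at $1$) would suffice. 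Since $\|S_N\|_2^2=N$ exactly, the task is purely to rule out atypical concentration of the two-dimensional random vector $S_N$ near the origin.

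To produce such an estimate I would condition on the values $z(p)$ at the small primes $p\le\sqrt N$. Every $n\le N$ is either $\sqrt N$-smooth or factors uniquely as $n=mp$ with $p\in(\sqrt N,N]$ prime and $m$ smooth, whence
\[ S_N=S_N'+\sum_{\sqrt N<p\le N} c_p\,z(p),\qquad c_p=\sum_{\substack{m\le N/p\\ m\ \text{smooth}}} z(m), \]
where the center $S_N'$ and the coefficients $c_p$ are measurable with respect to the conditioning. Given the small primes, the tail is a sum of independent, rotation-invariant complex variables $c_p z(p)$, with characteristic function $\prod_p J_0(|t|\,|c_p|)$; bounding the density of such a sum at every point by $O(1/\sum_p|c_p|^2)$ yields the conditional bound $\mathbb{P}(|S_N|\le\lambda\sqrt N\mid\text{small primes})\lesssim \lambda^2 N/\sum_p|c_p|^2$. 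Since $\mathbb{E}\sum_p|c_p|^2=\#\{n\le N:\ n=mp\ \text{as above}\}\asymp N$, the typical conditional variance is of order $N$, which is exactly what is needed; the even-moment asymptotics $\|S_N\|_{2k}\asymp\sqrt N(\log N)^{(k-1)^2/(2k)}$, applied to the smooth partial sums $c_p$, are then used to control the probability that $\sum_p|c_p|^2$ is atypically small, and optimizing the range of primes kept in the decomposition is what ultimately fixes the numerical exponent.

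The main obstacle is precisely this anti-concentration step with the correct constant. The clean density bound $O(1/\sum_p|c_p|^2)$ is only heuristic: one must estimate $\int|\prod_p J_0(|t|\,|c_p|)|\,dt$ uniformly, and this degenerates when a few coefficients $c_p$ dominate or when $\sum_p|c_p|^2$ is small, that is, exactly on the rare configurations of small primes for which the conditional random walk fails to smear its mass away from the origin. Balancing the contribution of these bad events, whose probability is estimated through the moments of the smooth sums, against the gain on the good event, integrated over all scales $\lambda$, is the delicate computation that produces the exponent $0.07672$; it also explains why this exponent is necessarily larger than the one obtainable for the single norm $\|S_N\|_1$, where the lossy small-ball analysis can be bypassed by duality.
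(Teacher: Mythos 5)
Your opening reduction is sound as far as it goes: by Jensen's inequality $\|S_N\|_q\ge\exp(\mathbb{E}\log|S_N|)$ for every $q>0$, so the single lower-tail bound
\[ \mathbb{E}\log|S_N|\ \ge\ \tfrac12\log N-0.07672\,\log\log N-O(1) \]
would indeed give \eqref{small} for all $q>0$ at once, and your integral inequality correctly converts this into the small-ball estimate $\mathbb{P}\left(|S_N|\le\lambda\sqrt N\right)\lesssim\lambda^2(\log N)^{2\cdot 0.07672}$. But that small-ball estimate is exactly where the proposal stops being a proof. The conditional density bound $O\left(1/\sum_p|c_p|^2\right)$ for $\sum_{\sqrt N<p\le N}c_p\,z(p)$ is false in general: if a single coefficient $c_p$ dominates, the conditional law is essentially uniform measure on a circle and has no bounded density at all. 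You acknowledge this yourself, but the ``balancing of bad events'' you then defer is not a routine computation to be filled in later; it requires lower-tail control of $\sum_p|c_p|^2$, i.e.\ of quadratic means of smooth Dirichlet-like sums, which is precisely the hard core of the later work of Harper and of Harper--Nikeghbali--Radziwi{\l\l} on this very problem, and is at least as deep as the theorem you are trying to prove. Moreover, nothing in your outline ties the outcome of that deferred computation to the exponent $0.07672$: in the paper this number is not the result of optimizing a conditioning scale, it is any constant exceeding $(1-\log 2)/4=0.07671\ldots$, which emerges from the Sathe--Selberg asymptotics for $\#\{n\le N:\Omega(n)=m\}$ at the critical degree of homogeneity $m\approx\tfrac12\log\log N$. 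Asserting that your scheme ``produces'' this constant is unjustified.

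A secondary but real flaw: your treatment of $q\ge1$ is circular, since the bound $\|S_N\|_1\gg\sqrt N(\log N)^{-0.05616}$ is a result of this same paper, obtained by refining the very argument you are attempting to replace; it cannot be quoted as a known companion estimate. (This would be moot if your main plan worked, since the $\mathbb{E}\log|S_N|$ bound covers all $q>0$ simultaneously.) For contrast, the paper's actual route avoids small-ball analysis entirely: it decomposes $S_N=\sum_m S_{N,m}$ into homogeneous parts according to $\Omega(n)=m$, shows by a fourth-moment computation and Sathe--Selberg that $\|S_{N,m}\|_4\ll\|S_{N,m}\|_2$ when $m\le\frac{e^{-\varepsilon}}{2}\log\log N$, deduces $\|S_{N,m}\|_2\ll_\varepsilon\|S_{N,m}\|_q$ for $0<q<2$ by H\"older, and finally transfers the lower bound from $S_{N,m}$ to $S_N$ via a one-variable coefficient estimate whose cost, a factor $(\log\log N)^{\max(1/q-1,0)}$, is negligible against powers of $\log N$. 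To complete your write-up you would need either to carry out that argument or to genuinely prove your anti-concentration inequality; as it stands, the central step is an unproven and very difficult claim.
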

This  estimate is of course of interest only for small $q$; our method allows us to improve \eqref{small} for each individual $0<q<2$ as will be demonstrated in the last section of the paper. In the range $q>2$, we note that the $L^4$ norm has an interesting number theoretic interpretation and has been estimated with high precision \cite{ACZ}:
\[ \left\| S_N\right\|_4^4=\frac{12}{\pi^2} N^2 \log N + c N^2 + O\left(N^{19/13} (\log N)^{7/13}\right)\]
with $c$ a certain number theoretic constant. This means in particular that the critical exponent of arithmetic Steinhaus chaos $S:=(S_N)$ satisfies $q(S)<4$. We mention without proof that, applying the Hardy--Littlewood inequality from \cite{BHS} to $S_N^2$, we have been able to verify that in fact $q(S)\le 8/3$. A further elaboration of our methods could probably lower this estimate slightly, but this would not alter the main conclusion that it remains unknown whether $q(S)$ is positive. 
%Our research has led us to suspect, though, that $q(S)\ge 2$ and hence that Helson's conjecture is false. 

Before turning to the proof of Theorem~\ref{th18}, we mention the following simple fact: There exists a constant $c<1$ such that $\| S_N \|_1\le c \| S_N\|_2$ when $N\ge 2$. To see this, we apply the Cauchy--Schwarz inequality to the product of $(1-\varepsilon z(2))S_N$ and $(1-\varepsilon z(2))^{-1}$ to obtain 
\[ \| S_N\|_1^2\le  \frac{1}{(1-\varepsilon^2)}\cdot  \left((1-\varepsilon)^2 [N/2]+(1+\varepsilon^2)[(N+1)/2]\right)\le \frac{N-(\varepsilon-\varepsilon^2)(N-1)}{1-\varepsilon^2} \]
for every $0<\varepsilon<1$. Choosing a suitable small $\varepsilon$, we obtain the desired constant $c<1$.

\section{Proof of Theorem~\ref{th18}}

Our proof starts from a decomposition of $S_N$ into a sum of homogeneous polynomials. To this end, we set 
\[ E_{N,m}:=\left\{n\le N: \ \Omega(n)=m\right\}, \]
where $\Omega(n)$ is the number of prime factors of $n$, counting multiplicities. Correspondingly, we introduce the homogeneous polynomials
\[ S_{N,m}(z):=\sum_{n\in E_{N,m}} z(n) \]
so that we may write
\begin{equation} \label{homo} S_{N}(z)=\sum_{m\le(\log N)/\log 2} S_{N,m}(z). \end{equation}

We need two lemmas. The first is a well-known estimate of Sathe; the standard reference for this result is Selberg's paper \cite{S}. To formulate this lemma, we introduce the  function
\[ \Phi(z):=\frac{1}{\Gamma(z+1)}\prod_{p} \left(1-1/p)\right)^z \left(1-z/p\right)^{-1}, \]
where the product runs over all prime numbers $p$. This function is meromorphic in ${\Bbb C}$ with simple poles at the primes and zeros at the negative integers.

\begin{lemma}\label{sathe}
When $N\ge 3$ and $1\le m \le (2-\varepsilon) \log\log N$ for $0<\varepsilon<1$, we have
\[ |E_{N,m}|=\frac{N}{\log N} \Phi\left(\frac{m}{\log\log N}\right) \frac{(\log\log N)^{m-1}}{(m-1)!}\left(1+O\left(\frac{1}{\log\log N}\right)\right), \]
where the implied constant in the error term only depends on $\varepsilon$.
\end{lemma}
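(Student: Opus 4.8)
The plan is to derive this classical estimate by the Selberg--Delange method, viewing $|E_{N,m}|$ as the $m$th Taylor coefficient of a generating function in an auxiliary complex variable $z$. First I would introduce
\[ F(x,z):=\sum_{n\le x} z^{\Omega(n)}=\sum_{m\ge 0} |E_{x,m}|\,z^m, \]
and note that, since $z^{\Omega(n)}$ is completely multiplicative in $n$, its Dirichlet series has the Euler product
\[ \sum_{n=1}^\infty \frac{z^{\Omega(n)}}{n^s}=\prod_p\left(1-\frac{z}{p^s}\right)^{-1},\qquad \Re s>1. \]
The idea is to compare this with $\zeta(s)^z$ by writing
\[ \prod_p\left(1-\frac{z}{p^s}\right)^{-1}=\zeta(s)^z\,G(s,z),\qquad G(s,z):=\prod_p\left(1-\frac{z}{p^s}\right)^{-1}\left(1-\frac{1}{p^s}\right)^{z}. \]
The corrective product $G(s,z)$ converges absolutely and is holomorphic near $s=1$ as long as $|z|\le 2-\varepsilon$ (the Euler factor at $p=2$ would produce a pole only at $z=2$), so that all the analytic difficulty is concentrated in the explicit factor $\zeta(s)^z$, whose only singularity near the line $\Re s=1$ is the branch point at $s=1$.

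Next I would recover the asymptotics of $F(x,z)$ by contour integration. Applying Perron's formula and deforming the line of integration into a Hankel-type contour wrapping the branch point at $s=1$, the standard Selberg--Delange analysis yields
\[ F(x,z)=x\,\frac{G(1,z)}{\Gamma(z)}\,(\log x)^{z-1}\left(1+O\left(\frac{1}{\log x}\right)\right), \]
uniformly for $|z|\le 2-\varepsilon$; here the main term is exactly the inverse Mellin transform of the local factor $\zeta(s)^z\sim(s-1)^{-z}$ near $s=1$, and the classical zero-free region of $\zeta$ is what bounds the contribution of the remaining part of the contour. The work in this step is to keep the dependence on $z$ explicit, so that the error term is uniform on the whole disc $|z|\le 2-\varepsilon$.

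Finally I would extract $|E_{N,m}|$ from $F(N,z)$ by Cauchy's coefficient formula,
\[ |E_{N,m}|=\frac{1}{2\pi i}\oint_{|z|=\rho}\frac{F(N,z)}{z^{m+1}}\,dz, \]
and evaluate the integral by the saddle point method. Substituting the asymptotic for $F(N,z)$ and writing $(\log N)^{z}=\exp(z\log\log N)$, the integrand splits into the slowly varying factor $G(1,z)/\Gamma(z)$ and the rapidly varying factor $\exp(z\log\log N)/z^{m+1}$, whose saddle point sits at $\rho=m/\log\log N$. Choosing the radius of integration equal to this value, one pulls the slowly varying factor out at the saddle while the remaining integral is the exact Cauchy integral $\frac{1}{2\pi i}\oint \exp(z\log\log N)\,z^{-m-1}\,dz=(\log\log N)^m/m!$. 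Using $G(1,z)/\Gamma(z)=z\,\Phi(z)$, which follows from $\Gamma(z+1)=z\Gamma(z)$ and the definition of $\Phi$, together with $\rho=m/\log\log N$, this collapses to the claimed main term $\frac{N}{\log N}\,\Phi(m/\log\log N)\,(\log\log N)^{m-1}/(m-1)!$.

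I expect the main obstacle to be the uniformity in $z$ of the Selberg--Delange asymptotic all the way out to $|z|=2-\varepsilon$. The factor $\Phi(z)=G(1,z)/\Gamma(z+1)$ inherits from the Euler factor at $p=2$ a pole at $z=2$, and the hypothesis $m\le(2-\varepsilon)\log\log N$ is precisely what keeps the saddle point $\rho=m/\log\log N$ bounded away from that pole; controlling the error terms uniformly as $\rho$ approaches $2-\varepsilon$, and justifying the saddle point approximation in that regime, is the delicate part of the argument.
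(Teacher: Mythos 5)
The paper does not prove this lemma --- it is quoted as a known theorem of Sathe/Selberg with reference to \cite{S} --- and your outline is precisely the standard Selberg--Delange argument by which that source establishes it: compare the Dirichlet series of $z^{\Omega(n)}$ with $\zeta(s)^z$, obtain an asymptotic for $\sum_{n\le x}z^{\Omega(n)}$ uniformly in $|z|\le 2-\varepsilon$, and extract the $m$th Taylor coefficient by a Cauchy integral at the saddle radius $\rho=m/\log\log N$, the bookkeeping $G(1,z)/\Gamma(z)=z\Phi(z)$ correctly producing the stated main term. The outline is sound and correctly identifies where the hypothesis $m\le(2-\varepsilon)\log\log N$ is used; the one imprecision worth noting is that the intermediate asymptotic must carry an additive error, $F(x,z)=x(\log x)^{z-1}\bigl(\tfrac{G(1,z)}{\Gamma(z)}+O(1/\log x)\bigr)$, rather than the multiplicative form you wrote (which fails near $z=0$ where $1/\Gamma(z)$ vanishes), and the two genuinely technical steps --- uniformity of the Selberg--Delange expansion up to $|z|=2-\varepsilon$ and the rigorous replacement of the slowly varying factor by its value at the saddle --- are flagged but not carried out.
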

%We will use the following consequence of this estimate. Suppose that $m=\beta \log\log N$ with $\varepsilon\le \beta\le 2-\varepsilon$. Then by %Lemma~\ref{sathe} and Stirling's formula, we have
%where the implied constants only depend on $\varepsilon$.

The second lemma is a general statement about the decomposition of a holomorphic function into a sum of homogeneous polynomials. For simplicity, we consider only an arbitrary
holomorphic polynomial $P(z)$ in $d$ complex variables $z=(z_1,...,z_d)$. Such a polynomial has a unique decomposition
\[ P(z)=\sum_{m=0}^k P_m(z), \]
where $k$ is the degree of $P$ and 
\[ P_m(z)=\sum_{|\alpha|=m} a_{\alpha} z^\alpha \]
is a homogeneous polynomial of degree $m$. Here we use standard multi-index notation, which means that $\alpha=(\alpha_1,..., \alpha_d)$, where $\alpha_1$, ..., $\alpha_d$ are nonnegative integers,
\[ z^{\alpha}=z_1^{\alpha_1}\cdots z_d^{\alpha_d}, \]
and $|\alpha|=\alpha_1+\cdots+\alpha_d $. At this point, the reader should recognize that if we represent an arbitrary integer $n\le N$ by its prime factorization $p_1^{\alpha_1}\cdots p_d^{\alpha_d}$ (here $d=\pi(N)$) and set $\alpha(n)=(\alpha_1,...,\alpha_d)$, then we may write 
\[ S_N(z)=\sum_{n=1}^N z^{\alpha(n)}. \]
Hence, as already pointed out, \eqref{homo} is the decomposition of $S_N$ into a sum of homogeneous polynomials, and we also see that $|\alpha(n)|=\Omega(n)$.

We let $\mu_d$ denote normalized Lebesgue measure on $\T^d$ and define
\[ \| P\|_q^q:= \int_{\T^d} |P(z)|^q d\mu_d(z)\]
for every $q>0$. The variables $z_1$,..., $z_d$ can be viewed as independent Steinhaus variables so that $\| S_N\|_q$ has the same meaning as before.
\begin{lemma}\label{coeff}
There exists an absolute constant $C$, independent of $d$, such that
\[ \| P_m \|_q \le \begin{cases} \| P \|_q, & q\ge 1 \\
                                               C m^{1/q-1} \| P \|_q, & 0<q<1\end{cases} \]
holds for every holomorphic polynomial $P$ of $d$ complex variables.
\end{lemma}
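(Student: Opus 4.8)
The plan is to extract the homogeneous part $P_m$ from $P$ using an averaging (rotation) operator, and then estimate the $L^q$ norm of this operator on the unit circle. For the easy case $q\ge 1$, I would write the projection onto degree-$m$ homogeneous polynomials as an integral over a single circle: replacing $z$ by $e^{i\theta}z$ multiplies each homogeneous component $P_j$ by $e^{ij\theta}$, so
\[ P_m(z)=\frac{1}{2\pi}\int_0^{2\pi} P(e^{i\theta}z)\,e^{-im\theta}\,d\theta. \]
By Minkowski's integral inequality (valid since $q\ge 1$) together with the rotation-invariance of $\mu_d$, which gives $\|P(e^{i\theta}\,\cdot\,)\|_q=\|P\|_q$ for each fixed $\theta$, I immediately get $\|P_m\|_q\le \|P\|_q$. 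This handles the first line of the claimed inequality with no loss.

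The case $0<q<1$ is the substantive one, since Minkowski's inequality fails and the naive bound would blow up. Here I would view $t\mapsto P(e^{it}z)$, for fixed $z$, as a trigonometric polynomial of degree at most $k$ in the single variable $t$, whose $m$-th Fourier coefficient is exactly $P_m(z)$. The plan is therefore to prove a one-variable coefficient estimate of the form
\[ |\widehat{f}(m)|^q \le C^q\, m^{q-1}\,\frac{1}{2\pi}\int_0^{2\pi}|f(t)|^q\,dt \]
for every analytic trigonometric polynomial $f(t)=\sum_{j\ge 0} c_j e^{ijt}$ and its $m$-th coefficient, then raise to the power $q$, integrate in $z$ over $\T^d$, and use Fubini together with rotation-invariance to convert $\int_{\T^d}\int_0^{2\pi}|P(e^{it}z)|^q\,dt\,d\mu_d(z)$ back into $\|P\|_q^q$. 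The constant $C^q m^{q-1}$ becomes $Cm^{1/q-1}$ after taking $q$-th roots, matching the statement.

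The main obstacle is thus the one-dimensional Hardy-type inequality $|\widehat f(m)|\le C m^{1/q-1}\|f\|_{L^q(\T)}$ for analytic polynomials when $0<q<1$. This is a known phenomenon: although the Fourier coefficients of an $H^q$ function with $0<q<1$ need not be bounded by the $L^q$ norm, they grow at most like $m^{1/q-1}$, which is the sharp rate. I would establish it by a factorization or subordination argument for analytic functions: writing the analytic polynomial in terms of its boundary values, one controls $|\widehat f(m)|$ by the $H^q$ norm with the stated power of $m$, the power $1/q-1$ arising from the Hausdorff--Young-type scaling in $H^q$ spaces. The key point is that analyticity (one-sided spectrum) is essential, and it is available here precisely because $P$ is a holomorphic polynomial, so each slice $t\mapsto P(e^{it}z)$ has nonnegative frequencies only. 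Once this single-variable estimate with the correct constant $C$ independent of the degree and of $m$ is in hand, the passage to $d$ variables is the routine Fubini-and-rotation-invariance argument sketched above, and the absolute constant $C$ is inherited unchanged.
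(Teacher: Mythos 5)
Your proposal is correct and follows essentially the same route as the paper: slice $P$ along rotations $z\mapsto e^{i\theta}z$ so that $P_m(z)$ becomes the $m$-th coefficient of a one-variable analytic polynomial, apply the classical Hardy--Littlewood coefficient estimate $|\widehat f(m)|\le Cm^{1/q-1}\|f\|_{H^q}$ (which the paper cites from Duren, p.~98, and which you correctly identify as the crux, including the essential role of one-sided spectrum), and then integrate over $\T^d$ using Fubini and rotation invariance. The $q\ge 1$ case via Minkowski's inequality is likewise the same trivial endpoint of that coefficient estimate.
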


\begin{proof}
We introduce the transformation $z_w=(wz_1,...,wz_d)$, where $w$ is a point on the unit circle $\T$. We may then write
\[ P(z_w)=\sum_{m=0}^k P_m(z) w^m.\]
It follows that we may consider the polynomials $P_m(z)$ as the coefficients of a polynomial in one complex variable. Then a classical coefficient estimate
(see \cite[p. 98]{D}) shows that
\[ |P_m(z)|^q\le \begin{cases} \int_{\T} |P_m(z_w)|^q d\mu_1(w), & q\ge 1 \\
                                               C m^{1-q} \int_{\T} |P_m(z_w)|^q d\mu_1(w), & 0<q<1.\end{cases} \]
Integrating this inequality over $\T^d$ with respect to $d\mu_d(z)$ and using Fubini's theorem, we obtain the desired estimate.
\end{proof}

We now turn to the proof of Theorem~\ref{th18}. The idea of the proof can be related to an interesting study of Harper \cite{Ha} from which it can be deduced that, asymptotically, the square-free part of the homogeneous polynomial $S_{N,m}/\sqrt{N}$  has a Gaussian distribution when $m=o(\log\log N)$. When $m=\beta \log\log N$ for $\beta$ bounded away from $0$, this is no longer so, but what we will use, is a much weaker statement: When $\beta$ is small enough, the $L^2$ and $L^4$ norms are comparable. The proof will consist in identifying for which 
$\beta$ this holds. 

To this end, we first observe that
\begin{equation}\label{l2}
\| S_{N,m}\|_2^2 =|E_{N,m}| .
\end{equation}
To estimate $\| S_{N,m} \|_4^4$, we begin by noting that
\[ |S_{N,m}|^2=|E_{N,m}|+ \sum_{k=0}^{m} \sum_{a,b\in E_{N,k}, (a,b)=1} |E_{N/\max(a,b),m-k}| z(a)\overline{z(b)}. \]
Squaring this expression and taking expectation, we obtain
\begin{align} \|S_{N,m}\|_4^4 & =|E_{N,m}|^2+2 \sum_{k=0}^{m} \sum_{a,b\in E_{N,k}, (a,b)=1, a<b} |E_{N/b,m-k}|^2 \nonumber  \\
& \le |E_{N,m}|^2+ 2 \sum_{k=0}^{m}\sum_{b\in E_{N,k}} |E_{b, k}|\cdot |E_{N/b,m-k}|^2  \nonumber \\
& \le 5 |E_{N,m}|^2+ \sum_{k=1}^{m-1}\sum_{b\in E_{N,k}} |E_{b, k}|\cdot |E_{N/b,m-k}|^2. \label{last} \end{align}
Here we used that, plainly,
\[ \sum_{b\in E_{N,0}} |E_{N/b,m}|^2=|E_{N,m}|^2 \quad \text{and} \quad \sum_{b\in E_{N,m}} |E_{b,m}| \le |E_{N,m}|^2  . \] 
To estimate the sum over $b$ in \eqref{last}, we begin by observing that Lemma~\ref{sathe} 
implies that
\begin{align} |E_{N/b,m-k}| & \ll b^{-1} |E_{N,m-k}|, \quad b\le \sqrt{N},  \nonumber \\
|E_{b,k}| & \ll bN^{-1} |E_{N,k}|, \quad \sqrt{N}<b\le N. \label{largeN}
\end{align}
We split correspondingly the sum into two parts:
\begin{equation} 
\sum_{b\in E_{N,k}} |E_{b, k}|\cdot |E_{N/b,m-k}|^2 
\ll |E_{N,m-k}|^2 \sum_{b\in E_{\sqrt{N}, k}} b^{-2} |E_{b, k}| + |E_{N,k}| \sum_{b\in E_{N,k}\setminus E_{\sqrt{N}, k}} b N^{-1} |E_{N/b,m-k}|^2.
\label{split} \end{equation}
To deal with the first of the two sums in \eqref{split}, we begin by using Lemma~\ref{sathe} so that we get smooth terms in the sum:
\[ \sum_{b\in E_{\sqrt{N},k}} b^{-2} |E_{b,k}| \ll \sum_{b\in E_{\sqrt{N},k}\setminus\{1,2\}} \frac{(\log\log b)^{k-1}}{b (\log b) (k-1)!}=
\sum_{2<b\le \sqrt{N}} g(b) \frac{(\log\log b)^{k-1}}{b (\log b) (k-1)!}, \]
where $g(n)$ is the characteristic function of the set $E_{\sqrt{N},k}$. We apply Abel's summation formula to the latter sum and  obtain, using  also Lemma~\ref{sathe},
\begin{align} \sum_{b\in E_{\sqrt{N},k}} b^{-2} |E_{b,k}| 
 & \ll
\big| E_{\sqrt{N},k}\big| N^{-1/2}\frac{(\log\log N)^{k-1}}
{(\log N)(k-1)!}+\int_3^{N} \big| E_{x,k}\big| \frac{(\log\log x)^{k-1}}
{x^2(\log x)(k-1)!} dx \nonumber \\ 
& \ll \frac{(\log\log N)^{2(k-1)}}
{(\log N)^2((k-1)!)^2}+\int_3^{N}  \frac{(\log\log x)^{2(k-1)}}
{x(\log x)^2((k-1)!)^2} dx  \nonumber \\
%& \ll \frac{(\log\log N)^{2(k-1)}}
%{(\log N)^2((k-1)!)^2} + \max\Big(1,\frac{(\log\log N)^{2k-1}}
%{(\log N)((k-1)!)^2 (2k-1)}\Big) \nonumber \\
& \ll \frac{1}{((k-1)!)^2}\int_0^{\infty}  y^{2(k-1)}
e^{-y} dy = \frac{(2k-2)!}{((k-1)!)^2}\ll \frac{2^{2k}}{\sqrt{k}}. \label{small2}
 \end{align}
Arguing in a similar fashion, using Abel's summation formula and again \eqref{largeN}, we get
\begin{align} \sum_{b\in E_{N,k}\setminus E_{\sqrt{N},k}} bN^{-1} |E_{N/b,m-k}|^2 
& \ll 
N \int_{\sqrt{N}}^{N/3} \big| E_{x,k}\big| \frac{\Big(\log\log \frac{N}{x}\Big)^{2(m-k-1)}}
{x^2\big(\log \frac{N}{x}\Big)^2((m-k-1)!)^2} dx \nonumber \\  
& \ll \frac{|E_{N,k}|}{((m-k-1)!)^2}
 \int_0^{\infty}  y^{2(m-k-1)}
e^{-y} dy \nonumber \\
& \ll  |E_{N,k}|
\cdot \frac{2^{2(m-k)}}{\sqrt{m-k}}. \label{big} \end{align}
 %N^2 \sum_{b\in E_{N,k}}\frac{(\log\log b)^{k-1}(\log\log (N/b))^{2(m-k-1)}}
%{b(\log b)(\log(N/b))^2(k-1)!((m-k-1)!)^2}. \end{equation}
Inserting \eqref{small2} and \eqref{big} into \eqref{split}, we obtain
 \[ \sum_{b\in E_{N,k}} |E_{b, k}|\cdot |E_{N/b,m-k}|^2  \ll |E_{N,m-k}|^2\cdot \frac{2^{2k}}{\sqrt{k}}+|E_{N,k}|^2\cdot\frac{2^{2(m-k)}}{\sqrt{m-k}}.\]
Returning to \eqref{last} and using \eqref{l2}, we therefore find that
\[ \|S_{N,m} \|_4^4 \ll \| S_{N,m}\|_2^4 \Big(1+\sum_{k=1}^{m-1} \frac{|E_{N,m-k}|^2}{|E_{N,m}|^2}\cdot \frac{2^{2k}}{\sqrt{k}}\Big). \label{final}\]
Applying again Lemma~\ref{sathe}, we get
\begin{align} \sum_{k=1}^{m-1} \frac{|E_{N,m-k}|^2}{|E_{N,m}|^2}\cdot \frac{2^{2k}}{\sqrt{k}} & \ll \sum_{k=1}^{m-1} \frac{1}{\sqrt{k}}\cdot\left(\frac{(m-1)!}{(m-k-1)!}\right)^2\cdot
\left(\frac{2}{\log\log N}\right)^{2k} \nonumber  \\ 
& \ll  \label{improve} \sum_{k=1}^{m-1} \frac{1}{\sqrt{k}}\cdot e^{-\frac{k^2}{m}}\cdot
\left(\frac{2m}{\log\log N}\right)^{2k} .\end{align}
It follows that the two norms are comparable whenever $m=\frac{e^{-\varepsilon}}{2} \log\log N$ for $\varepsilon>0$, in which case H\"{o}lder's inequality yields
\begin{equation}\label{holder} \| S_{N,m} \|_2 \ll_\varepsilon \| S_{N,m} \|_q \end{equation}
for $0<q<2$. By \eqref{l2}, Lemma~\ref{sathe}, and Stirling's formula, we have   
\begin{equation} \label{stirling} \| S_{N,m}\|_2 =|E_{N,m}|^{1/2} \asymp \sqrt{N} (\log N)^{-\delta(\varepsilon)} m^{-1/4} \end{equation}
when $m\le \frac{e^{-\varepsilon}}{2}\log\log N$, where 
\[ \delta(\varepsilon):=(2-e^{-\varepsilon}(1+\log 2+\varepsilon))/4=(1-\log 2)/4+O(\varepsilon)\]
when $\varepsilon\to 0$. Combining this with \eqref{holder} and applying Lemma~\ref{coeff}, we infer that
\[ \sqrt{N} (\log N)^{-\delta(\varepsilon)} (\log\log N)^{-1/4} \ll \| S_{N,m} \|_q \ll (\log\log N)^{\max(1/q-1,0)} \| S_N\|_q \]
when $0<q<2$. Theorem~\ref{th18} follows since $(1-\log 2)/4<0.07672$. 

\section{Concluding remarks}

\noindent \textbf{1.} We will now deduce \eqref{almost} from Theorem~\ref{th18}. Since $t\mapsto \sum_{n=1}^N n^{-it}$ is an almost periodic function, it suffices to consider the interval $I=[0, T]$ for some large $T$. Moreover, by \eqref{normid}, it amounts to the same to estimate the measure of the subset
\[ \mathcal{E}:=\left \{z: \ |S_N(z)|\ge c \sqrt{N}(\log N)^{-0.07672} \right\}\]
of $\T^{\pi(N)}$ for a suitable $c$ depending on $\varepsilon$. We find that
\begin{align*} \| S_N\|_q^q & \le c^q N^{q/2}(\log N)^{-0.07672 q} +\int_{\mathcal E} |S_N(z)|^q d\mu_{\pi(N)}(z) \\
& \le c^q N^{q/2} (\log N)^{-0.07672 q}+\| S_N\|_2^{q} |{\mathcal E}|^{1-q/2},  \end{align*}
where we in the last step used H\"{o}lder's inequality. Using Theorem~\ref{th18} to estimate $\|S_N\|_q^q$ from below and recalling that $\|S_N\|_2=\sqrt{N}$, we therefore get
\begin{equation} \kappa_q (\log N)^{-0.07672q} \le c^q (\log N)^{-0.07672 q}+ |{\mathcal E}|^{1-q/2},\label{meas} \end{equation}
where $\kappa_q$ is a constant depending on $q$.
Given $\varepsilon>0$, we now choose $q$ such that $\varepsilon=0.07672 q/(1-q/2)$ and $c^q=\kappa_q/2$. Then  
\eqref{meas}  yields
\[ |{\mathcal E}|\ge (\kappa_q/2)^{(1-q/2)^{-1}} (\log N)^{-\varepsilon}. \] 

\noindent \textbf{2.} We may improve \eqref{small} in the following way. If $m=\frac{e^y}{2}\log\log N$ with $y>0$, then we see from \eqref{improve} that
\[ \sum_{k=1}^{m-1} \frac{|E_{N,m-k}|^2}{|E_{N,m}|^2} \cdot \frac{2^{2k}}{\sqrt{k}} \ll_y e^{my^2}\]
which in turn implies that $\| S_{N,m} \|_2/\|S_{N,m}\|_4 \gg_y e^{-my^2/4}$. By H\"{o}lder's inequality, 
\[  \| S_{N,m}\|_2 \le \|S_{N,m}\|_q^{\frac{q}{4-q}} \| S_{N,m} \|_4^{\frac{4-2q}{4-q}}, \]
and we therefore get
\[  \| S_{N,m} \|_q \gg_y \| S_{N,m} \|_2 e^{-my^2(2/q-1)/2}= \| S_{N,m} \|_2 (\log N)^{-e^y y^2(2/q-1)/4}. \]
We also observe that \eqref{stirling} now takes the form  
\[ \| S_{N,m}\|_2=|E_{N,m}|^{1/2}\asymp \sqrt{N} (\log N)^{(e^y(1+\log 2-y)-2)/4} m^{-1/4}.\]
We find that the exponent of $\log N$ in the lower bound for $\| S_{N,m}\|_q$ becomes optimal if we choose $y$ as the positive solution to the quadratic equation $(2/q-1)y^2+(4/q-1)y-\log 2=0$; when $q=1$, we get for instance $y=0.21556...$ and hence, after a numerical calculation, 
\[ \| S_N \|_1\gg \sqrt{N}(\log N)^{-0.05616}. \]

\noindent \textbf{3.} Our proof shows that we essentially need $\beta\le 1/2$ for the projection
\[ P_\beta S_N:=\sum_{m\le \beta \log\log N} S_{N,m} \]
to have comparable $L^2$ and $L^4$ norms. To use our method of proof to show that Helson's conjecture fails, it would suffice to know that the projection $P_1 S_N$ has comparable $L^2$ and $L^q$ norms for some $q>2$, because in that case $\|P_1 S_N\|_2\ge (1+o(1))\sqrt{N}$. However, we see no reason to expect that such a $q$ exists.
%It seems reasonable to conjecture that this holds and hence that Helson's conjecture is false. \vspace{2mm}
\vspace{2mm}
  
\noindent \textbf{4.} A careful examination of our proof, including a detailed estimation of the last sum in \eqref{improve}, shows that
\begin{equation} \label{fourtwo} \| S_{N,m} \|_4/\| S_{N,m} \|_2 \asymp (\log\log N)^{1/16} \end{equation}
when $m=\frac{1}{2}\log\log N+O(\sqrt{\log\log N})$. This means that  $\frac{1}{2}\log\log N$ is indeed the critical degree of homogeneity and, moreover, that the two norms fail to be comparable in the limiting case.
%It would be interesting to find similarly the critical degree of homogeneity $m=a(q)\log\log N$ for  $\| S_{N,m} \|_q/\| S_{N,m} \|_2$ for $2<q<4$ %and whether a similar phenomenon occurs in the limiting case.  
\vspace{2mm}

\noindent \textbf{5.} Helson's problem makes sense for other distributions; an interesting case is when
$X(p)$ are independent Rademacher functions $\epsilon(p)$ taking values $+1$ and $-1$ each with probability  $1/2$. If we set $a(n)=|\mu(n)|$ (here $\mu(n)$ is the M\"{o}bius function),  then we obtain  arithmetic Rademacher chaos:
\[ R_N(\epsilon):=\sum_{n=1}^N |\mu(n)| \epsilon(n). \] 
Rademacher chaos was first considered by Wintner \cite{Win} and has been studied by many authors, see e.g. \cite{Hal, Ha}. 
%We suspect in this case as well %, for the same reason as just indicated for $S_N$, 
%that the critical exponent is at least 2. 
Here it is of interest to note that Chatterjee and Soundararajan showed that $R_{N+y}-R_N$ is approximately Gaussian when $y=o( N/\log N)$ \cite{CS}, which means that the analogue of Helson's conjecture is false in short intervals $[N, N+y]$.
\vspace{2mm}

\noindent \textbf{6.} While we were preparing a revision of this paper, further progress on Helson's problem was announced by Harper, Nikeghbali, and Radziwi{\l\l} \cite{HNR}. By a completely different method, relying on Harper's lower bounds for sums of random multiplicative functions  \cite{Ha2}, these authors obtained the lower bound $\sqrt{N}(\log\log N)^{-3+o(1)}$ for both $\mathbb{E}|R_N|$ and $\mathbb{E}|S_N|$. In view of this result, it seems reasonable to conjecture that $\| S_{N,m} \|_2/\| S_{N,m} \|_1$ is bounded whenever $m=e^{-\varepsilon}\log\log N$ for $\varepsilon>0$ and that $m=\log\log N$ is the limiting case for the boundedness of this ratio. Comparing with \eqref{fourtwo} and taking into account Remark 3 above, one might wonder if the ratio $\| S_{N} \|_2/\| S_{N} \|_1$ does indeed grow as a power of $\log\log N$.

\section*{Acknowledgement}
We are most grateful to Kannan Soundararajan for pertinent and helpful remarks.

\end{document}